\documentclass[preprint,12pt]{elsarticle}
\usepackage{amsthm,amsmath,amssymb}
\usepackage{graphicx}
\usepackage[colorlinks=true,citecolor=black,linkcolor=black,urlcolor=blue]{hyperref}
\usepackage{mathrsfs}
\usepackage{epstopdf}
\usepackage{epsfig}

% use these commands for typesetting doi and arXiv references in the bibliography

% all overfull boxes must be fixed;
% i.e. there must be no text protruding into the margins

\theoremstyle{plain}
\newtheorem{theorem}{Theorem}
\newtheorem{lemma}[theorem]{Lemma}

\theoremstyle{definition}
\newtheorem{definition}[theorem]{Definition}
\newtheorem{example}[theorem]{Example}
\newtheorem{conjecture}[theorem]{Conjecture}

\theoremstyle{remark}
\newtheorem{remark}[theorem]{Remark}

\journal{European J. Combin. }
\begin{document}
\title{Counterexamples to the interpolating conjecture on partial-dual genus polynomials of ribbon graphs}
\author{Qi Yan\\
\small School of Mathematics\\[-0.8ex]
\small China University of Mining and Technology\\[-0.8ex]
\small P. R. China\\
Xian'an Jin\footnote{Corresponding author.}\\
\small School of Mathematical Sciences\\[-0.8ex]
\small Xiamen University\\[-0.8ex]
\small P. R. China\\
\small{\tt Email:qiyan@cumt.edu.cn; xajin@xmu.edu.cn}
}

\begin{abstract}
Gross, Mansour and Tucker introduced the partial-dual orientable genus polynomial and the partial-dual Euler genus polynomial.
They showed that the partial-dual genus polynomial for an orientable ribbon graph is interpolating and gave an analogous conjecture: The partial-dual Euler-genus polynomial for any non-orientable ribbon graph is interpolating. In this paper, we first give some counterexamples to the conjecture. Then motivated by these counterexamples, we further find two infinite classes of counterexamples.
\end{abstract}

\begin{keyword}
Ribbon graph, partial-dual genus polynomial, conjecture, counterexample.

\MSC 05C10\sep 05C30\sep 05C31\sep  57M15
\end{keyword}

\maketitle

\section{Introduction}
We assume that the readers are familiar with the basic knowledge of topological graph theory and in particular the ribbon graphs and partial duals, and we refer the readers to \cite{BR2, CG, EM, GT}. Let $G$ be a ribbon graph and $A\subseteq E(G)$. We denote by $G^{A}$ the partial dual of $G$ with respect to $A$.

Gross, Mansour and Tucker \cite{GMT} introduced the partial-dual orientable genus polynomials for orientable ribbon graphs and the partial-dual Euler genus polynomials for arbitrary ribbon graphs.

\begin{definition}\label{def-1}\cite{GMT}
The \emph{partial-dual Euler genus polynomial} of any ribbon graph $G$ is the generating function
$$^{\partial}\varepsilon_{G}(z)=\sum_{A\subseteq E(G)}z^{\varepsilon(G^{A})}$$
that enumerates partial duals by Euler genus.
The \emph{partial-dual orientable genus polynomial} of an orientable ribbon graph $G$ is the generating function
$$^{\partial}\Gamma_{G}(z)=\sum_{A\subseteq E(G)}z^{\gamma(G^{A})}$$
that enumerates partial duals by orientable genus.
\end{definition}

They posed some research problems and made some conjectures. Conjecture 5.3 in their paper states that
\begin{conjecture}\label{con-1}\cite{GMT} (Interpolating).
The partial-dual Euler-genus polynomial $^{\partial}\varepsilon_{G}(z)$ for any non-orientable ribbon graph $G$ is interpolating.
\end{conjecture}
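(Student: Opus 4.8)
\section*{Proof proposal}

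Conjecture~\ref{con-1} is what the paper refutes, so the ``proof'' I would aim at is a refutation: a non-orientable ribbon graph $G$ whose exponent set $\{\varepsilon(G^{A}):A\subseteq E(G)\}$ omits some integer lying strictly between two that occur. It helps first to recall why the orientable analogue holds, in order to see the pressure point. One clean way to see it: on the Boolean lattice of edge subsets, $\gamma(G^{A})$ is $1$-Lipschitz in the Hamming metric --- toggling a single edge of $A$ changes $v(G^{A})+f(G^{A})$ by $0$ or $\pm 2$, hence moves $\gamma$ by at most $1$ --- and a $1$-Lipschitz integer-valued function on a connected graph has an interval as its image. For Euler genus the corresponding step breaks: a single-edge partial dual can move $v(G^{A})+f(G^{A})$, and hence $\varepsilon(G^{A})$, by $2$, and now there is no parity relation (of the type $\varepsilon=2\gamma$ available in the orientable case) to force the skipped value to reappear via some other subset. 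So the target is a non-orientable $G$ for which an intermediate Euler-genus value is missed by \emph{every} subset at once.

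The plan then has two stages: (i) locate a sporadic counterexample among small ribbon graphs, and (ii) promote it to infinite families. Stage (i) is a finite search: there are only $2^{|E(G)|}$ partial duals, and for each one $\varepsilon(G^{A})=2k(G)-v(G^{A})+e(G)-f(G^{A})$, where $k(G)$ and $e(G)$ are partial-duality invariants and $v(G^{A})$, $f(G^{A})$ can be read directly off an arrow presentation of $G$. I would begin with bouquets --- one-vertex ribbon graphs, i.e.\ signed chord diagrams --- where partial duality is a transparent pivot-type operation on the diagram. For one or two loops the polynomial is interpolating for degree reasons, so the first serious candidates are non-orientable bouquets on three loops: there $\varepsilon(G^{A})$ can already reach $3$ (when $f(G^{A})=1$) while $1$ is also attained, so the decisive question is whether the middle value $z^{2}$ can be omitted by all eight subsets. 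Failing that, I would scan small two- and three-vertex ribbon graphs carrying a twisted loop.

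Stage (ii): once a sporadic $G$ with a gap is in hand, I would extract the structural reason for the gap --- most plausibly that, within the relevant sub-range, every realizable $\varepsilon(G^{A})$ lies in a single residue class modulo $2$ --- and then generate families by iterating a connected sum or amalgamation with $G$, or by inserting controlled blocks of parallel or twisted edges, and prove the gap survives. The genuine difficulty, and the step I expect to absorb the real work, is the non-realizability claim: producing subsets that attain the two ends of the gap is routine, but excluding \emph{all} $2^{|E(G)|}$ subsets for the intermediate value demands simultaneous control of $v(G^{A})$ and $f(G^{A})$ across the entire lattice. I would expect to need an explicit recursion for $^{\partial}\varepsilon_{G}(z)$ along the chosen family, or a closed formula, rather than any soft argument --- and then the gap is simply read off that formula.
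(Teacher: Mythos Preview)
Your two-stage strategy --- locate a sporadic counterexample by brute search among small bouquets, then promote it to an infinite family and certify the gap by a closed formula --- is exactly the shape of the paper's argument. Two points are worth flagging, one minor and one substantive.

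First, your three-loop starting point will not succeed: the paper reports that no counterexample exists with fewer than four edges, and the first one found is the bouquet with signed rotation $(-1,-2,3,4,2,1,3,4)$, whose polynomial is $4z^{2}+12z^{4}$ (so the missing exponent is $3$, not $2$). Your fallback of scanning further would of course reach this.

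Second, and more substantively, the paper's infinite families are not built by connected sum or amalgamation but by writing down explicit \emph{prime} bouquets $B_{2n+1}$ and $C_{2n+2}$ directly. The engine is the bouquet-specific identity $\varepsilon(B^{A})=\varepsilon(A)+\varepsilon(A^{c})$ (their Lemma~\ref{le-01}), which reduces the problem to counting boundary components of the two complementary sub-bouquets; these counts are then evaluated by band moves, giving a closed formula for $^{\partial}\varepsilon$ in which every exponent is even except for a single isolated odd top-degree term. Your connected-sum route would also work --- the partial-dual Euler-genus polynomial is multiplicative under one-point join, so squaring $4z^{2}+12z^{4}$ already gives infinitely many examples --- but those examples are non-prime, and the paper explicitly restricts to prime counterexamples. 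What their direct construction buys is primeness and a transparent combinatorial reason for the gap (a single parity-controlling statistic $s(A)$); what your approach buys is that it requires no new computation once a single sporadic example is in hand.
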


In this paper, we first give some counterexamples to Conjecture \ref{con-1}. Motivated by these counterexamples, we then find two infinite classes of counterexamples to the conjecture.

\section{Some counterexamples}

A {\it bouquet} is a ribbon graph having only one vertex. A \emph{signed rotation} of a bouquet is a cyclic ordering of the half-edges at the vertex and if the edge is an untwisted loop, then we give the same sign $+$ to the corresponding two half-edges, and give the different signs (one $+$, the other $-$) otherwise. The sign $+$ is always omitted. See Figure \ref{f01} for an example. Sometimes we will use the signed rotation to represent the bouquet itself. A signed rotation is called \emph{prime} if it can not be cut into two parts such that the two half-edges of each edge belong to a single part. We shall only consider prime counterexamples.

\begin{example}\label{ex-01}
Let $B$ be the bouquet with the signed rotation $$(-1, -2, 3, 4, 2, 1, 3, 4)$$ as shown in Figure \ref{f01}. We have
$^{\partial}\varepsilon_{B}(z)=4z^{2}+12z^{4}$ (see Table \ref{tab-1} for details).
\end{example}
\begin{figure}[!htbp]
\begin{center}
\includegraphics[width=9cm]{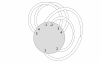}
\caption{The signed rotation of the bouquet is $(-1, -2, 3, 4, 2, 1, 3, 4)$.}
\label{f01}
\end{center}
\end{figure}
\begin{table}
\normalsize
\begin{center}
\begin{tabular}{|c|c|c|c|}
\hline
  $A$ & $\varepsilon(A)$ & $\varepsilon(A^{c})$ & $\varepsilon(B^{A})$ \\ \hline
  $\emptyset$ & 0 & 4 & 4 \\ \hline
  $\{1\}$ & 1 & 3 & 4 \\ \hline
  $\{2\}$ & 1 & 3 & 4  \\ \hline
  $\{3\}$ & 0 & 2 & 2  \\ \hline
  $\{4\}$ & 0 & 2 & 2  \\ \hline
  $\{1, 2\}$ & 2 & 2 & 4  \\ \hline
  $\{1, 3\}$ & 2 & 2 & 4  \\ \hline
  $\{1, 4\}$ & 2 & 2 & 4  \\ \hline
  $\{2, 3\}$ & 2 & 2 & 4  \\ \hline
  $\{2, 4\}$ & 2 & 2 & 4  \\ \hline
  $\{3, 4\}$ & 2 & 2 & 4  \\ \hline
  $\{1, 2, 3\}$ & 2 & 0 & 2  \\ \hline
  $\{1, 2, 4\}$ & 2 & 0 & 2  \\ \hline
  $\{1, 3, 4\}$ & 3 & 1 & 4  \\ \hline
  $\{2, 3, 4\}$ & 3 & 1 & 4  \\ \hline
  $\{1, 2, 3, 4\}$ & 4 & 0 & 4  \\ \hline
\end{tabular}
\end{center}
\caption{Euler genera of all partial duals of $(-1, -2, 3, 4, 2, 1, 3, 4)$.}
\label{tab-1}
\end{table}
Example \ref{ex-01} is the first counterexample we found, and there are no counterexamples having fewer edges.
We then found more counterexamples to Conjecture \ref{con-1} as listed in Table \ref{tab-2} with the help of computer.
\begin{table}
\normalsize
\begin{center}
\begin{tabular}{|c|c|c|}
\hline
The signed rotation of $B$ & $^{\partial}\varepsilon_{B}(z)$                            \\ \hline
$(-1, 2, 3, 4, 5, 1, 4, 5, 2, 3)$              & $8z^{2}+16z^{4}+8z^{5}$                  \\ \hline
$(-1, -2, 3, 1, 4, 2, 5, 4, 3, 5)$             & $2z+10z^{3}+8z^{4}+12z^{5}$              \\ \hline
$(-1, 2, 3, 2, 4, 5, 6, 1, 5, 6, 3, 4)$        & $8z^{2}+32z^{4}+16z^{5}+8z^{6}$          \\ \hline
$(-1, 2, 1, 3, 4, 5, 6, 2, 5, 6, 3, 4)$        & $16z^{3}+40z^{5}+8z^{6}$                 \\ \hline
$(-1, -2, 3, 1, 4, 2, 5, 4, 3, 6, 5, 6)$       & $2z+14z^{3}+12z^{4}+28z^{5}+8z^{6}$      \\ \hline
$(-1, -2, 3, 4, 5, 6, 2, 1, 5, 6, 3, 4)$       & $8z^{2}+24z^{4}+32z^{6}$                 \\ \hline
$(-1, 2, 3, 2, 4, 3, 5, 6, 7, 1, 6, 7, 4, 5)$  & $8z^{2}+48z^{4}+16z^{5}+40z^{6}+16z^{7}$ \\ \hline
$(-1, 2, 3, 4, 5, 6, 7, 1, 6, 7, 4, 5, 2, 3)$  & $16z^{2}+48z^{4}+48z^{6}+16z^{7}$        \\ \hline
$(-1, 2, 1, 3, 4, 3, 5, 6, 7, 2, 6, 7, 4, 5)$  & $16z^{3}+80z^{5}+16z^{6}+16z^{7}$        \\ \hline
$(-1, 2, 3, 4, 5, 6, 7, 1, 4, 5, 6, 7, 2, 3)$  & $32z^{4}+64z^{6}+32z^{7}$                \\ \hline
$\cdots \cdots$  & $\cdots \cdots$                \\ \hline

\end{tabular}
\end{center}
\caption{Other counterexamples to Conjecture \ref{con-1}.}
\label{tab-2}
\end{table}

\section{Two infinite classes of counterexamples}
In this section, motivated by examples in Table 2, we further give two infinite classes of counterexamples to Conjecture \ref{con-1}. The following lemma will be used.

\begin{lemma}\label{le-01}\cite{GMT}
Let $B$ be a bouquet, and let $A\subseteq E(B)$. Then $$\varepsilon(B^{A})=\varepsilon(A)+\varepsilon(A^{c}).$$
\end{lemma}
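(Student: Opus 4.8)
\noindent The plan is to read off $\varepsilon(B^{A})$ from the Euler-genus formula for ribbon graphs and then recognise the right-hand side as the sum of two Euler genera. Recall that for a connected ribbon graph $H$ one has $\varepsilon(H)=2-v(H)+e(H)-f(H)$, where $v(H)$, $e(H)$ and $f(H)$ denote the numbers of vertices, edges and boundary components of $H$; this is just the Euler-characteristic bookkeeping for the closed surface obtained by capping each boundary circle of $H$ with a disk. Since $B$ is a bouquet it is connected, and partial duality changes neither connectedness nor the edge set, so $B^{A}$ is a connected ribbon graph with $e(B^{A})=e(B)=|A|+|A^{c}|$. Moreover, for any $S\subseteq E(B)$ the spanning ribbon subgraph of $B$ on the edge set $S$ is itself a (connected) one-vertex bouquet --- this is precisely the object whose Euler genus is written $\varepsilon(S)$ in the statement --- so $\varepsilon(S)=1+|S|-f_{S}$, where $f_{S}$ is its number of boundary components.

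The single ingredient that is not mere bookkeeping is the standard combinatorial description of partial duals: for any ribbon graph $G$ and any $A\subseteq E(G)$, the vertices of $G^{A}$ are in natural bijection with the boundary components of the spanning ribbon subgraph $(V(G),A)$, and, dually, the boundary components of $G^{A}$ are in natural bijection with the boundary components of $(V(G),A^{c})$. The second statement follows from the first via the geometric-dual identity $(G^{A})^{*}=G^{A^{c}}$ together with $f(H)=v(H^{*})$; the first itself follows by a short induction on $|A|$ from the local picture of a single-edge partial dual, or by inspecting an arrow presentation of $G$. Applied to the bouquet $B$ this gives $v(B^{A})=f_{A}$ and $f(B^{A})=f_{A^{c}}$, so that
$$\begin{aligned}
\varepsilon(B^{A}) &= 2-v(B^{A})+e(B^{A})-f(B^{A}) = 2-f_{A}+\bigl(|A|+|A^{c}|\bigr)-f_{A^{c}}\\
&= \bigl(1+|A|-f_{A}\bigr)+\bigl(1+|A^{c}|-f_{A^{c}}\bigr) = \varepsilon(A)+\varepsilon(A^{c}),
\end{aligned}$$
which is the claimed identity.

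The step I expect to be the real obstacle is the boundary-walk description of the vertices of a partial dual --- the bijection between $V(G^{A})$ and the boundary components of $(V(G),A)$ --- since everything else is elementary: the Euler-genus formula, the invariance of the edge set and of connectedness under partial duality, and the relation $(G^{A})^{*}=G^{A^{c}}$. In a self-contained write-up this bijection would deserve either a short inductive proof via the single-edge partial dual or an explicit reference; granting it, the lemma follows immediately from the displayed computation.
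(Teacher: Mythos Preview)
Your argument is correct. Note, however, that the paper does not give its own proof of this lemma: it is quoted from \cite{GMT} and used as a black box, so there is no in-paper proof to compare against. That said, your computation is exactly the identity the authors unwind later (in the proof of Lemma~\ref{lem-02}) when they combine Lemma~\ref{le-01} with the Euler formulas $1-|A|+f(A)=2-\varepsilon(A)$ and $1-|A^{c}|+f(A^{c})=2-\varepsilon(A^{c})$ to obtain $\varepsilon(B^{A})=|A|+|A^{c}|-f(A)-f(A^{c})+2$; you have simply reversed that derivation. The one nontrivial input you flag --- that $v(G^{A})$ equals the number of boundary components of the spanning ribbon subgraph $(V(G),A)$ --- is indeed the standard description of partial-dual vertices (see, e.g., \cite{CG} or \cite{EM}), and everything else is bookkeeping.
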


In addition, a technique called \emph{band move} in knot theory \cite{Liv} will be used, that is, a deformation of the bouquet by sliding one of the two ends of a ribbon along the boundary of the bouquet over other ribbons. This move does not change the number of boundary components. See Figures 3 and 4.

\subsection{Infinite class 1}
For each $n\geq 1$, let $B_{2n+1}$ be the bouquet with the signed rotation $$(1, 2, 3, \cdots, 2n, 2n+1, -1, 2n, 2n+1, \cdots, 2, 3),$$ as shown in Figure \ref{f02}.
\begin{figure}[!htbp]
\begin{center}
\includegraphics[width=16cm]{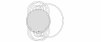}
\caption{The bouquet $B_{2n+1}$.}
\label{f02}
\end{center}
\end{figure}
Note that the edge $1$ is interlaced with all other edges and $2i, 2i+1$ are interlaced with each other for $1\leq i \leq n$.
Let $A\subseteq E(B_{2n+1})$ and $1\leq i \leq n$. If $\{2i, 2i+1\}\subseteq A$, we call $\{2i, 2i+1\}$ {\it double ribbons} of $A$. If $2i\in A$ but $2i+1\notin A$ (or $2i+1\in A$ but $2i\notin A$), we call $2i$ (or $2i+1$) a {\it single ribbon} of $A$.

\begin{lemma}\label{lem-02}
Let $A\subseteq E(B_{2n+1})$ and let $s(A)$ be the number of single ribbons of $A$. Then
\begin{eqnarray*}
\varepsilon({B_{2n+1}}^{A})=\left\{\begin{array}{ll}
                    2n+1, & \mbox{when}~s(A)=0;\\
                    2n-2s(A)+2, & \mbox{when}~s(A)>0.
                   \end{array}\right.
\end{eqnarray*}
\end{lemma}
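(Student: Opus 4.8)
The plan is to use Lemma~\ref{le-01}, which turns the problem into a computation of Euler genera of spanning sub-bouquets of $B_{2n+1}$: $\varepsilon({B_{2n+1}}^{A})=\varepsilon(A)+\varepsilon(A^{c})$, where $\varepsilon(A)$ denotes the Euler genus of the bouquet obtained from $B_{2n+1}$ by deleting every edge outside $A$. The edge $1$ lies in exactly one of $A$ and $A^{c}$; and since, within each pair $\{2i,2i+1\}$, the set $A$ contains a single ribbon precisely when $A^{c}$ does, we have $s(A)=s(A^{c})$ and the two cases are symmetric, so assume $1\in A$. Let $d$ be the number of pairs with $\{2i,2i+1\}\subseteq A$, let $d'$ be the number with $\{2i,2i+1\}\subseteq A^{c}$, and set $s=s(A)$. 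Each of the $n$ pairs contributes to exactly one of these three counts, so $d+d'+s=n$. The lemma then follows from the three identities
\begin{equation*}
\varepsilon(A^{c})=2d',\qquad \varepsilon(A)=2d+1 \ \ (s=0),\qquad \varepsilon(A)=2d+2 \ \ (s\ge 1),
\end{equation*}
because summing gives $\varepsilon({B_{2n+1}}^{A})=2(d+d')+1=2n+1$ when $s=0$ and $2(d+d')+2=2(n-s)+2=2n-2s+2$ when $s\ge 1$.

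Next I would compute $\varepsilon(A^{c})$. The bouquet on $A^{c}$ has only untwisted loops, and reading its signed rotation off from Figure~\ref{f02}, each half-edge of $A^{c}$ appears once in increasing order of label and once with the pairs listed in the reverse order. Consequently the pair of largest index meeting $A^{c}$ occupies a cyclically consecutive block of the rotation, so it splits off as a summand of a one-vertex connected sum: a torus if that pair is a double ribbon of $A^{c}$, a planar loop if it is a single ribbon. Induction on $|A^{c}|$ then shows the bouquet on $A^{c}$ is a connected sum of $d'$ tori with some planar loops, so $\varepsilon(A^{c})=2d'$.

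For $\varepsilon(A)$, the edge $1$ is a twisted loop interlaced with every other edge of $A$, and the signed rotation of the bouquet on $A$ has the shape $(1,X_{1},\dots,X_{m},-1,X_{m},\dots,X_{1})$, where $X_{1},\dots,X_{m}$ are the ``blocks'' of $A\setminus\{1\}$ in increasing order — each block being a single half-edge (of a single ribbon) or the ascending pair $2i,2i+1$ (of a double ribbon) — and the second listing repeats each block with the same internal order. I would then apply band moves, which preserve the number of boundary components: for each double ribbon slide one of its feet past the intervening feet so that its two half-edges become adjacent, turning it into a consecutive $2i,2i+1,2i,2i+1$ block that detaches as a torus summand. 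Removing all $d$ of these leaves the bouquet $C$ with rotation $(1,u_{1},\dots,u_{s},-1,u_{s},\dots,u_{1})$: a twisted loop together with $s$ untwisted loops, pairwise non-interlaced and each interlaced with the twisted loop. If $s=0$ then $C$ is a M\"obius band and $\varepsilon(C)=1$. If $s\ge 1$, a direct boundary walk on this rotation shows $C$ has exactly $s$ boundary components, so $\varepsilon(C)=1+(s+1)-s=2$; alternatively, further band moves strip off $s-1$ of the loops as trivial summands and leave a Klein bottle. In all cases $\varepsilon(A)=2d+\varepsilon(C)$ has the stated value. (Each of these genus computations can equivalently be done by computing the $\mathbb{Z}_{2}$-rank of the interlacement matrix of the sub-bouquet, with twisted loops placing $1$'s on the diagonal, which equals its Euler genus.)

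The bookkeeping $d+d'+s=n$ and the analysis of $A^{c}$ are routine. The step I expect to be the main obstacle is the computation of $\varepsilon(A)$: one must confirm that, although the twisted loop $1$ is interlaced with everything, the band moves genuinely isolate each double ribbon as a detachable handle, and one must keep track of what sliding a foot across the twisted loop $1$ does to signs. The cleanest way to settle this rigorously is to replace the band-move reductions (at least those producing $C$ and handling $C$ itself) by the explicit boundary-walk computation on the displayed rotations, which is mechanical though slightly tedious.
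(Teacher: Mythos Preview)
Your proof is correct and follows essentially the same approach as the paper: both invoke Lemma~\ref{le-01}, use the symmetry $s(A)=s(A^{c})$ to assume $1\in A$, reduce $A^{c}$ by inductively peeling off the outermost pair, and analyze $A$ via band moves. The only cosmetic differences are that the paper phrases everything in terms of boundary-component counts $f(\cdot)$ (combining via the Euler formula at the end) rather than Euler genera directly, and that it processes the edges of $A$ in strict label order---sliding the end of edge~$1$ itself to detach each handle (Figure~\ref{f04})---rather than first stripping all double ribbons and then analyzing your residual core $C$; your interlacement-matrix alternative does not appear in the paper.
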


\begin{proof}
First observe that $s(A^{c})=s(A)$.  We can assume that $1\in A$ and let $f(B)$ denote the number of boundary components of a bouquet $B$.

If the maximum labelled edge of the signed rotation of $A^{c}$ appears as double ribbons, that is, $A^{c}=(P, 2i, 2i+1, 2i, 2i+1, Q)$,
where $P$ and $Q$ are strings, then $f(A^{c})=f(P, Q)$.
If the maximum labelled edge of the signed rotation of $A^{c}$ appears as a single ribbon, that is, $A^{c}=(P, 2i, 2i, Q)$ or $A^{c}=(P, 2i+1, 2i+1, Q)$, then $f(A^{c})=f(P, Q)+1$. Repeating the previous argument leads to $f(A^{c})=s(A^{c})+1=s(A)+1$.

If the minimum labelled edge except 1 of the signed rotation of $A$ appears as a single ribbon, that is, $A=(1, 2j, P, -1, Q, 2j)$ (or $A=(1, 2j+1, P, -1, Q, 2j+1)$), then $f(A)=f(P, 2j, 1, 2j, -1, Q)$ as shown in Figure \ref{f03}.
\begin{figure}[!htbp]
\begin{center}
\includegraphics[width=12cm]{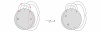}
\caption{By sliding $P$ along the outer edge of ribbon $2j$ we change $(1, 2j, P, -1, Q, 2j)$ to $(P, 2j, 1, 2j, -1, Q).$}
\label{f03}
\end{center}
\end{figure}
Since both $P$ and $Q$ do not contain edge $1$ and $s(P, Q)=s(A)-1$, it follows that
$$f(A)=f(P, Q)=s(P, Q)+1=s(A).$$
If the minimum labelled edge except 1 of the signed rotation of $A$ appears as double ribbons, that is, $A=(1, 2j, 2j+1,P, -1, Q, 2j, 2j+1)$,
then $$f(A)=f(2j, 2j+1, 2j, 2j+1, 1, P, -1, Q)$$ as shown in Figure \ref{f04}. If $s(A)=0$, then
\begin{eqnarray*}
f(A)&=&f(2j, 2j+1, 2j, 2j+1, 1, P, -1, Q)\nonumber\\
&=&f(1, P, -1, Q)=\cdots=f(1,-1)=1.
\end{eqnarray*}
Otherwise, $s(A)>0$, then repeat the above process, we have $f(A)=s(A).$ Therefore,
\begin{eqnarray*}
f(A)=\left\{\begin{array}{ll}
                    1, & \mbox{when}~s(A)=0;\\
                    s(A), & \mbox{when}~s(A)>0.
                   \end{array}\right.
\end{eqnarray*}

\begin{figure}[!htbp]
\begin{center}
\includegraphics[width=12cm]{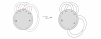}
\caption{By sliding the ribbon 1 along the boundary of the handle formed by ribbons $2j$ and $2j + 1$ we change $(1, 2j, 2j+1,P, -1, Q, 2j, 2j+1)$ to $(2j, 2j+1, 2j, 2j+1, 1, P, -1, Q).$}
\label{f04}
\end{center}
\end{figure}

Since $\varepsilon({B_{2n+1}}^{A})=\varepsilon(A)+\varepsilon(A^{c})$ by Lemma \ref{le-01} and $$1-|A|+f(A)=2-\varepsilon(A), 1-|A^{c}|+f(A^{c})=2-\varepsilon(A^{c})$$ by the Euler formulas, it follows that
\begin{eqnarray*}
\varepsilon({B_{2n+1}}^{A})&=&|A|+|A^{c}|-f(A)-f(A^{c})+2\nonumber\\
                           &=&\left\{\begin{array}{ll}
                    2n+1, & \mbox{when}~s(A)=0;\\
                    2n-2s(A)+2, & \mbox{when}~s(A)>0.
                   \end{array}\right.
\end{eqnarray*}

\end{proof}

\begin{theorem}\label{the-01}
The partial-dual Euler genus polynomial for $B_{2n+1}$ is given
by the formula $$^{\partial}\varepsilon_{B_{2n+1}}(z)=2^{n+1}z^{2n+1}+\sum_{s = 1}^n 2^{n+1}\dbinom{n}{s} z^{2n-2s+2}.$$
\end{theorem}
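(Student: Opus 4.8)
The plan is to derive the theorem as an immediate counting consequence of Lemma~\ref{lem-02}. That lemma tells us the Euler genus $\varepsilon({B_{2n+1}}^{A})$ depends on $A$ only through $s(A)$, the number of single ribbons of $A$, so the whole task reduces to counting, for each value $s\in\{0,1,\dots,n\}$, how many subsets $A\subseteq E(B_{2n+1})$ satisfy $s(A)=s$. The edge set consists of the distinguished edge $1$ together with the $n$ interlaced pairs $\{2i,2i+1\}$ for $1\le i\le n$. The edge $1$ contributes a free binary choice (in or out), giving a factor of $2$. For each pair $\{2i,2i+1\}$ there are four possibilities: both out, both in, $2i$ only, or $2i+1$ only; the first two contribute $0$ to $s(A)$ and the last two contribute $1$ each.

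First I would fix $s$ and count. Choosing which $s$ of the $n$ pairs are the ``single-ribbon'' pairs can be done in $\binom{n}{s}$ ways; each chosen pair then has $2$ ways to be a single ribbon (keep $2i$ or keep $2i+1$), contributing $2^{s}$; each of the remaining $n-s$ pairs has $2$ ways to contribute nothing (both in or both out), contributing $2^{n-s}$; and edge $1$ contributes a further factor $2$. Hence the number of subsets $A$ with $s(A)=s$ is
\[
2\cdot\binom{n}{s}\cdot 2^{s}\cdot 2^{n-s}=2^{n+1}\binom{n}{s}.
\]
Now I would split the generating function $^{\partial}\varepsilon_{B_{2n+1}}(z)=\sum_{A\subseteq E(B_{2n+1})}z^{\varepsilon({B_{2n+1}}^{A})}$ according to $s(A)$. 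By Lemma~\ref{lem-02}, the subsets with $s(A)=0$ contribute $2^{n+1}z^{2n+1}$, and for each $s\ge 1$ the subsets with $s(A)=s$ contribute $2^{n+1}\binom{n}{s}z^{2n-2s+2}$. Summing over $s$ from $1$ to $n$ gives exactly the claimed formula.

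There is essentially no obstacle here beyond bookkeeping: the real work is Lemma~\ref{lem-02}. The one point that needs a sentence of care is that the counts are mutually exclusive and exhaustive---every subset has a well-defined $s(A)$ and falls into exactly one class---together with the consistency check $\sum_{s=0}^{n}2^{n+1}\binom{n}{s}=2^{n+1}\cdot 2^{n}=2^{2n+1}=2^{|E(B_{2n+1})|}$, confirming that all $2^{2n+1}$ subsets have been accounted for. Finally, to see that this is genuinely a counterexample to Conjecture~\ref{con-1}, I would observe that the exponents appearing with nonzero coefficients are $2n+1$ (odd) and $2n,2n-2,\dots,4,2$ (even), so the exponent set has a gap---for instance $2n$ and $2n+1$ both appear but no exponent strictly between consecutive even values, and more to the point the odd value $2n+1$ sits above the even values with nothing of intermediate parity---hence $^{\partial}\varepsilon_{B_{2n+1}}(z)$ is not interpolating.
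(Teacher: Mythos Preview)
Your argument is correct and follows essentially the same route as the paper: invoke Lemma~\ref{lem-02} so that the genus depends only on $s(A)$, then count the subsets $A$ with $s(A)=s$ as $2\cdot\binom{n}{s}\cdot 2^{s}\cdot 2^{n-s}=2^{n+1}\binom{n}{s}$ via the independent choices on edge $1$, the $s$ single-ribbon pairs, and the $n-s$ remaining pairs. Your closing remark on non-interpolation is extraneous to the theorem and a bit tangled; the clean statement is that for $n\ge 2$ the exponent $2n-1$ is absent while $2n$ and $2n+1$ are present.
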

\begin{proof}
By Lemma \ref{lem-02}, we have the following two cases:
\begin{enumerate}
  \item $\varepsilon({B_{2n+1}}^{A})=2n+1$ if and only if $s(A)=0$. Then we can choose $A$ in $2^{n+1}$ ways.
  \item $\varepsilon({B_{2n+1}}^{A})=2n-2s+2$ where $1\leq s \leq n$ if and only if $s(A)=s$. Then we can choose $s$ single edges for the ribbon subset $A$ in $\dbinom{n}{s}2^{s}$ ways and then select the remaining double edges in $2^{n-s}$ ways, and $A$ may or may not contain the edge $1$.
      Hence, we can choose $A$ in $2^{n+1} \dbinom{n}{s}$ ways.
\end{enumerate}

We obtain the formula.
\end{proof}

\begin{remark}
By Theorem \ref{the-01}, $B_{2n+1}$ is a counterexample for each $n\geq 2$.
\end{remark}

\subsection{Infinite class 2}

For each $n\geq 1$, let $C_{2n+2}$ be the bouquet with the signed rotation $$(1, 2, 3, 4, \cdots, 2n+1, 2n+2, -2, -1, 2n+1, 2n+2, \cdots, 3, 4)$$ as shown in Figure \ref{f05}.
\begin{figure}[!htbp]
\begin{center}
\includegraphics[width=12cm]{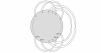}
\caption{The bouquet $C_{2n+2}$.}
\label{f05}
\end{center}
\end{figure}
Note that the edges $1, 2$ are interlaced with all other edges but $1, 2$ are not interlaced with each other and $2i+1, 2i+2$ are interlaced with each other for $1\leq i \leq n$. Let $A\subseteq E(C_{2n+2})$ and $1\leq i \leq n$. If $\{2i+1, 2i+2\}\subseteq A$, we call $\{2i+1, 2i+2\}$ {\it double ribbons} of $A$. If $2i+1\in A$ but $2i+2\notin A$ (or $2i+2\in A$ but $2i+1\notin A$), we call $2i+1$ (or $2i+2$) a {\it single ribbon} of $A$.

\begin{lemma}\label{lem-03}
Let $A\subseteq E(C_{2n+2})$ and let $s(A)$ be the number of single ribbons of $A$.
Then $\varepsilon({C_{2n+2}}^{A})=$
\begin{eqnarray*}
\left\{\begin{array}{ll}
                    2n+2, & \mbox{if one of 1, 2 is in}~A~\mbox{and the other is in}~A^{c}~\mbox{and}~s(A)=0;\\
                    2n-2s(A)+4, & \mbox{if one of 1, 2 is in}~A~\mbox{and the other is in}~A^{c}~\mbox{and}~s(A)>0;\\
                    2n-2s(A)+2, & \mbox{if 1, 2 are both in}~A~\mbox{or both in}~A^{c}.
                   \end{array}\right.
\end{eqnarray*}
\end{lemma}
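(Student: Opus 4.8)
The plan is to mirror the proof of Lemma~\ref{lem-02}. Writing $f(\cdot)$ for the number of boundary components of a bouquet (as in the proof of Lemma~\ref{lem-02}), the combination of Lemma~\ref{le-01}, the Euler formulas $1-|A|+f(A)=2-\varepsilon(A)$ and $1-|A^{c}|+f(A^{c})=2-\varepsilon(A^{c})$, and $|A|+|A^{c}|=2n+2$ gives
\[
\varepsilon({C_{2n+2}}^{A})=\varepsilon(A)+\varepsilon(A^{c})=2n+4-f(A)-f(A^{c}),
\]
so it suffices to compute $f(A)$ and $f(A^{c})$. Two remarks organize the bookkeeping: $s(A^{c})=s(A)$, and whether $\{1,2\}$ is split between $A$ and $A^{c}$ or lies wholly on one side is invariant under complementation. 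I would therefore assign each of $A,A^{c}$ one of three \emph{types}: type~I if it contains neither $1$ nor $2$ (so it is built only from untwisted loops among $3,\dots,2n+2$, with interlaced pairs exactly the $\{2i+1,2i+2\}$); type~II if it contains exactly one of $1,2$ (that edge is then a twisted loop interlaced with every other edge present); type~III if it contains both $1$ and $2$. If $1,2\in A$ then $A$ is type~III and $A^{c}$ type~I; if $1,2\notin A$ the roles swap; if exactly one of $1,2$ lies in $A$ then both $A$ and $A^{c}$ are type~II.

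For types~I and~II no new work is needed: deleting the half-edges outside such a subset and relabelling $\{3,\dots,2n+2\}$ by $j\mapsto j-1$ (and $2\mapsto 1$ if the surviving twisted loop is edge~$2$) turns its signed rotation into exactly the signed rotation of an ``$A^{c}$'' (for type~I) or an ``$A$'' (for type~II) occurring in the proof of Lemma~\ref{lem-02}, with the same interlacement pattern. Hence a type~I bouquet has $f=s+1$, and a type~II bouquet has $f=1$ if $s=0$ and $f=s$ if $s>0$, where $s$ denotes the number of single ribbons of that subset.

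The substantive case is type~III, where the signed rotation has the form $(1,2,\dots,-2,-1,\dots)$ and the surviving half-edges of $3,\dots,2n+2$ occur as adjacent pairs (double ribbons) and isolated letters (single ribbons). Following the template of Lemma~\ref{lem-02}, I would process the edges $3,4,\dots$ in increasing order: each double ribbon is first brought, by band moves sliding the ribbons $1$ and $2$ along the corresponding handle, into the detached form $\dots\,m,m+1,m,m+1\,\dots$ and then removed, which leaves $f$ unchanged; this reduces the signed rotation to $(1,2,e_{1},\dots,e_{k},-2,-1,e_{k},\dots,e_{1})$ where $e_{1}<\dots<e_{k}$ are the single ribbons (nested untwisted loops, each interlaced with both twisted loops $1$ and $2$). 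Then I would peel off the innermost single ribbon $e_{k}$, which now appears as $\dots,e_{k},-2,-1,e_{k},\dots$: a band move sliding one end of $e_{k}$ past the loops $1$ and $2$ should show that deleting $e_{k}$ drops $f$ by exactly~$1$. Iterating removes all $k=s$ single ribbons and leaves $(1,2,-2,-1)$, a once-punctured Klein bottle, with $f=1$; hence a type~III bouquet has $f=s+1$.

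Finally I substitute into $\varepsilon({C_{2n+2}}^{A})=2n+4-f(A)-f(A^{c})$. When $1,2$ lie on one side (type~III vs.\ type~I) this is $2n+4-(s+1)-(s+1)=2n-2s+2$, the third line. When $1,2$ are split (type~II vs.\ type~II) it is $2n+4-1-1=2n+2$ if $s=0$ (first line) and $2n+4-s-s=2n-2s+4$ if $s>0$ (second line). The main obstacle is the type~III band-move analysis, above all the claim that each single ribbon contributes exactly $+1$ to $f$ even though it is interlaced with both twisted loops: this is precisely where type~III behaves like type~I rather than like type~II (in type~II the first single ribbon contributes $0$), so the slide really must exploit the presence of both $1$ and $2$, and carrying it out cleanly and uniformly in $n$ — together with the parallel slides needed to detach the double ribbons past two twisted loops — is the delicate part.
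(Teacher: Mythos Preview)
Your overall framework---classify $A$ and $A^{c}$ into types I, II, III by how many of $\{1,2\}$ each contains, reduce types I and II to the boundary counts already obtained in the proof of Lemma~\ref{lem-02}, and assemble via $\varepsilon({C_{2n+2}}^{A})=2n+4-f(A)-f(A^{c})$---matches the paper exactly, and your final substitution is correct.

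The gap is in your type~III reduction. You propose to strip all double ribbons first (by sliding $1$ and $2$ along each handle) and only then peel off the single ribbons. But ``processing in increasing order'' does not keep each double ribbon adjacent to an end of $1$ or $2$: for instance, with $A=\{1,2,3,5,6,7\}$ in $C_{8}$ the induced rotation is $(1,2,3,5,6,7,-2,-1,7,5,6,3)$, and the double ribbon $5,6$ is flanked on both sides by the single ribbons $3$ and $7$, so neither end of $1$ or $2$ is adjacent to it and the Figure~\ref{f04}-type slide you invoke is unavailable as stated. Your later claim that removing the innermost single ribbon $e_{k}$ drops $f$ by exactly $1$ is likewise asserted rather than argued. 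Both points are repairable, but the paper's route is shorter and sidesteps them: it always treats the \emph{minimum} edge beyond $1,2$. If that edge is a double ribbon it \emph{is} adjacent to $1,2$ and the slide detaches it; if it is a single ribbon $2j{+}1$, one band move (sliding $P$ along $2j{+}1$, as in Figure~\ref{f03}) turns $(1,2,2j{+}1,P,-2,-1,Q,2j{+}1)$ into $(P,\,2j{+}1,1,2,2j{+}1,-2,-1,\,Q)$, which is the separable join of the block $(2j{+}1,1,2,2j{+}1,-2,-1)$ with $f=2$ and the type~I bouquet $(P,Q)$ with $f=s(A)$, giving $f(A)=s(A)+1$ in one stroke---no iterated peeling needed.
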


\begin{proof}
We know that $s(A)=s(A^{c})$.  If one of 1, 2 is in $A$ and the other is in $A^{c}$, then, as
in the proof of Lemma \ref{lem-02}, we have
\begin{eqnarray*}
f(A)=f(A^{c})=\left\{\begin{array}{ll}
                    1, & \mbox{when}~s(A)=0;\\
                    s(A), & \mbox{when}~s(A)>0.
                   \end{array}\right.
\end{eqnarray*}

If $1, 2\in A$, then $f(A^{c})=s(A)+1$.

\begin{enumerate}
  \item If the minimum labelled edge except 1, 2 of the signed rotation of $A$ appears as a single ribbon, that is,
$A=(1, 2, 2j+1, P, -2, -1, Q, 2j+1)$ (or $A=(1, 2, 2j+2, P, -2, -1, Q, 2j+2)$), then $$f(A)=f(P, 2j+1, 1, 2, 2j+1, -2, -1, Q).$$
Since both $P$ and $Q$ do not contain edges $1, 2$ and $s(P, Q)=s(A)-1$, it follows that
\begin{eqnarray*}
f(A)&=&f(P,Q)+f(2j+1, 1, 2, 2j+1, -2, -1)-1\nonumber\\
&=&f(P, Q)+1=(s(P, Q)+1)+1=s(A)+1.
\end{eqnarray*}
  \item If the minimum labelled edge except 1, 2 of the signed rotation of $A$ appears as double ribbons, that is, $$A=(1, 2, 2j+1, 2j+2,P, -2, -1, Q, 2j+1, 2j+2),$$
then $$f(A)=f(2j+1, 2j+2, 2j+1, 2j+2, 1, 2, P, -2, -1, Q).$$
If $s(A)=0$, then
\begin{eqnarray*}
f(A)&=&f(2j+1, 2j+2, 2j+1, 2j+2, 1, 2, P, -2, -1, Q)\nonumber\\
&=&f(1, 2, P, -2, -1, Q)=\cdots=f(1, 2, -2, -1)=1.
\end{eqnarray*}
Otherwise, $s(A)>0$, then repeat the above process, we have $f(A)=s(A)+1.$ Therefore,
we can see that $f(A)=s(A)+1$ for $0\leq s(A)\leq n.$
\end{enumerate}

Since $\varepsilon({C_{2n+2}}^{A})=\varepsilon(A)+\varepsilon(A^{c})$ by Lemma \ref{le-01} and $$1-|A|+f(A)=2-\varepsilon(A), 1-|A^{c}|+f(A^{c})=2-\varepsilon(A^{c}),$$ it follows that
$\varepsilon({C_{2n+2}}^{A})=|A|+|A^{c}|-f(A)-f(A^{c})+2=$
\begin{eqnarray*}
\left\{\begin{array}{ll}
                    2n+2, & \mbox{if one of 1, 2 is in}~A~\mbox{and the other is in}~A^{c}~\mbox{and}~s(A)=0;\\
                    2n-2s(A)+4, & \mbox{if one of 1, 2 is in}~A~\mbox{and the other is in}~A^{c}~\mbox{and}~s(A)>0;\\
                    2n-2s(A)+2, & \mbox{if 1, 2 are both in}~A~\mbox{or both in}~A^{c}.
                   \end{array}\right.
\end{eqnarray*}
\end{proof}

\begin{theorem}\label{the-02}
The partial-dual Euler genus polynomial for $C_{2n+2}$ is given
by the formula $$^{\partial}\varepsilon_{C_{2n+2}}(z)=2^{n+1}(n+2)z^{2n+2}+\sum_{s = 2}^n 2^{n+1} \left(\dbinom{n}{s}+\dbinom{n}{s-1}\right)z^{2n-2s+4}+2^{n+1}z^{2}.$$
\end{theorem}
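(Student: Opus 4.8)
The plan is to follow the template of the proof of Theorem~\ref{the-01}: invoke Lemma~\ref{lem-03} to sort the $2^{2n+2}$ subsets $A\subseteq E(C_{2n+2})$ by the value of $\varepsilon({C_{2n+2}}^{A})$, count each block, and collect terms. The basic enumeration fact I would establish first is that, for a fixed integer $s$ with $0\le s\le n$, the number of subsets of the pair-edges $\bigcup_{i=1}^{n}\{2i+1,2i+2\}$ having exactly $s$ single ribbons is $\binom{n}{s}2^{s}2^{n-s}=\binom{n}{s}2^{n}$: choose the $s$ pairs that are single, choose a representative in each of them, and decide for each of the remaining $n-s$ pairs whether it lies wholly inside or wholly outside $A$ (double ribbons versus not present). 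The edges $1$ and $2$ then contribute an independent factor of $2$ in each of the two relevant regimes of Lemma~\ref{lem-03}: two ways to have exactly one of $\{1,2\}$ in $A$ (the ``split'' regime), and two ways to have $\{1,2\}$ both in $A$ or both in $A^{c}$ (the ``together'' regime). Hence each fibre of given regime and given value $s(A)=s$ has size $2^{n+1}\binom{n}{s}$.

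Next I would read off from Lemma~\ref{lem-03} which pairs (regime of $1,2$; value of $s(A)$) land on a prescribed exponent of $z$. The key point is that the exponent $2n+2$ is hit three times: by the split regime with $s(A)=0$, by the split regime with $s(A)=1$ (since $2n-2\cdot1+4=2n+2$), and by the together regime with $s(A)=0$ (since $2n-2\cdot0+2=2n+2$); these contribute $2^{n+1}\binom{n}{0}$, $2^{n+1}\binom{n}{1}$ and $2^{n+1}\binom{n}{0}$, totalling $2^{n+1}(n+2)$. For each $s$ with $2\le s\le n$, the exponent $2n-2s+4$ is hit by the split regime with $s(A)=s$ and by the together regime with $s(A)=s-1$, contributing $2^{n+1}\bigl(\binom{n}{s}+\binom{n}{s-1}\bigr)$. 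The exponent $2$ is hit only by the together regime with $s(A)=n$, contributing $2^{n+1}$. Summing these three families of terms gives the stated formula.

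As a sanity check I would confirm that the coefficients sum to $2^{2n+2}$, the number of subsets of $E(C_{2n+2})$, which collapses by the binomial theorem. The only place the argument can slip is the bookkeeping of the coincidences in the exponents: making sure the exponent $2n+2$ collects exactly the three contributions above, that the exponent $2$ collects only one of them (the split regime would require the impossible value $s(A)=n+1$), and that the summation index in the middle sum runs precisely over $s=2,\dots,n$, so that every ``split'' term with $s(A)\ge1$ and every ``together'' term with $s(A)\le n-1$ is counted once and only once. Beyond this, the computation is the same as in Theorem~\ref{the-01}, with the single extra feature being the non-interlaced pair $\{1,2\}$ to keep track of.
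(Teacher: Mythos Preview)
Your proposal is correct and follows essentially the same route as the paper: both invoke Lemma~\ref{lem-03}, partition the $2^{2n+2}$ subsets by the regime of $\{1,2\}$ (split versus together) and the value $s(A)$, count each fibre as $2^{n+1}\binom{n}{s}$, and then collect the coincidences among exponents exactly as you describe. Your write-up is in fact a bit more explicit than the paper's on why each fibre has size $2^{n+1}\binom{n}{s}$ and adds the total-count sanity check, but there is no substantive difference in the argument.
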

\begin{proof}
By Lemma \ref{lem-03}, we have the following three cases:
\begin{enumerate}
  \item $\varepsilon({C_{2n+2}}^{A})=2n+2$ if and only if one of $1, 2$ is in $A$ and the other is in $A^{c}$ and $A$ has no single ribbons (or only one single ribbon) or $1, 2$ are both in $A$ or both in $A^{c}$ and $A$ has no single ribbons. Then we can choose $A$ in $2^{n+1}+2^{n+1}\dbinom{n}{1}+2^{n+1}=2^{n+1}(n+2)$ ways.
  \item $\varepsilon({C_{2n+2}}^{A})=2n-2s+4$ where $2\leq s \leq n$ if and only if one of $1, 2$ is in $A$ and the other is in $A^{c}$ and $A$ has $s$ single ribbons or $1, 2$ are both in $A$ or both in $A^{c}$ and $A$ has $s-1$ single ribbons. Then we can choose $A$ in
      $2^{n+1} \left(\dbinom{n}{s}+\dbinom{n}{s-1}\right)$ ways.
  \item $\varepsilon({C_{2n+2}}^{A})=2$ if and only if $1, 2$ are both in $A$ or both in $A^{c}$ and $A$ has $n$ single ribbons. Then we can choose $A$ in $2^{n+1}$ ways.
\end{enumerate}
\end{proof}

\begin{remark}
By Theorem \ref{the-02}, $C_{2n+2}$ is a counterexample for each $n\geq 1$. In particular, when $n=1$, $C_{4}$ is exactly the bouquet in Example \ref{ex-01}.
\end{remark}

\section{Acknowledgements}
This work is supported by NSFC (Nos. 12171402, 12101600) and the Fundamental Research Funds for the Central Universities  (Nos. 20720190062, 2021QN1037). We thank the referees sincerely for their valuable comments.

%\section*{References}
\bibliographystyle{model1b-num-names}
\bibliography{<your-bib-database>}
%\bibliographystyle{amcjoucc}
%\bibliography{amcexample}

\end{document}